\newtheorem{theorem}{Theorem}[section]
\newtheorem{lemma}[theorem]{Lemma}
\newtheorem{proposition}[theorem]{Proposition}
\newtheorem{definition}[theorem]{Definition}
\newtheorem{remark}[theorem]{Remark}
\newtheorem{notation}[theorem]{Notation}
\newcommand{\A}{{\mathcal A}}
\newcommand{\N}{{\mathbb N}}
\newcommand{\PP}{{\mathbb P}}
\newcommand{\E}{{\mathbb E}}
\newcommand{\e}{{\varepsilon}}
\title{Generalization of a theorem of Erd\H{o}s and R\'{e}nyi on Sidon Sequences}
\author{Javier Cilleruelo, S\'{a}ndor Z. Kiss, Imre Z. Ruzsa, Carlos Vinuesa}
\date{\today}
\begin{document}

\maketitle

\begin{abstract}
Erd\H os and R\'{e}nyi claimed and Vu proved that for all $h \ge 2$ and
for all $\e > 0$, there exists $g = g_h(\e)$ and a sequence
of integers $A$ such that the number of ordered representations of
any number as a sum of $h$ elements of $A$ is bounded by $g$, and
such that $|A\cap [1,x]| \gg x^{1/h-\e}$.

We give two new proofs of this result. The first one consists of an
explicit construction of such a sequence. The second one is
probabilistic and shows the existence of such a $g$ that satisfies $g_h(\e) \ll
\e^{-1}$, improving the bound $g_h(\e) \ll \e^{-h+1}$ obtained by
Vu.

Finally we use the ``alteration method'' to get a better bound for
$g_3(\e)$, obtaining a more precise
estimate for the growth of $B_3[g]$ sequences.
\end{abstract}

\section{Introduction}
Given an integer $h\ge 2$, we say that a sequence of integers $A$ is a
$B_h[g]$ sequence if every integer $n$ has at most $g$
representations as a sum of $h$ elements of $A$. We will write
$$r_{h,A}(n) = \left| \{ (a_1, a_2, \ldots, a_h) \ | \ n=a_1+\cdots +a_h,\quad a_1\le \cdots \le a_h,\quad a_i \in A \} \right|,$$
Thus, $A$ is a $B_h[g]$ sequence if $r_{h,A}(n) \le g$ for every positive integer $n$.

\smallskip

As usual, $A(x)=|A\cap [1,x]|$ counts the number of positive
elements of $A$ less than or equal to $x$. The counting method
easily gives $A(x)\ll x^{1/h}$ for any $B_h[g]$ sequence. It is
believed that a $B_h[g]$ sequence $A$ cannot satisfy $A(x)\gg
x^{1/h}$. However it is only known when $(h,g)=(\text{even},1)$.

\smallskip

In a seminal paper, Erd\H os and R\'{e}nyi \cite{ER} proved, using the
probabilistic method (see, for example, \cite{A} for an excellent
exposition of the method), that for any $\e > 0$ there exists $g=g(\e)$
and a $B_2[g]$ sequence such that $A(x)\gg x^{1/2-\e}$. In this
paper they claimed (but did not prove) that the same method gives the
analogous result for $h\ge 3$:

\smallskip

\begin{theorem} \label{h}
For any $\e>0$ and $h \ge 2$, there exists $g=g_h(\e)$ and a
$B_h[g]$ sequence, $A$, such that $A(x) \gg x^{1/h-\e}$.
\end{theorem}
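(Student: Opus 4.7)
The plan is to use the probabilistic method of Erd\H{o}s and R\'enyi, which they introduced for the $h=2$ case and claimed extends to general $h$. Fix $\delta\in(0,\e)$ and a constant $c>0$, and form a random set $A\subseteq\N$ by declaring each positive integer $n$ to lie in $A$ independently with probability $p_n=\min\{1,\,c\,n^{-1+1/h-\delta}\}$. The exponent is calibrated so that $\E[A(x)]=\sum_{n\le x}p_n\asymp x^{1/h-\delta}$; a Chernoff-type bound together with Borel--Cantelli then shows that almost surely $A(x)\gg x^{1/h-\delta}$ for all large $x$, which exceeds $x^{1/h-\e}$ by the choice of $\delta$.

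Next I would compute the expected number of representations. Writing
$$\E[r_{h,A}(n)]=\sum_{\substack{a_1\le\cdots\le a_h\\ a_1+\cdots+a_h=n}}p_{a_1}\cdots p_{a_h}$$
and approximating the sum by a Dirichlet integral over the simplex via the substitution $a_i=n\,t_i$ with $\sum t_i=1$, one obtains the uniform bound $\E[r_{h,A}(n)]\ll_{h,c} n^{-h\delta}$, which tends to zero. This alone is insufficient: a small expectation does not preclude $r_{h,A}(n)$ from being large for some $n$ with positive probability.

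To upgrade the expectation bound into a uniform pointwise bound, I would estimate a higher moment $\E[r_{h,A}(n)^k]$ for a suitably chosen integer $k$. Expanding $r_{h,A}(n)^k$ as a sum over $k$-tuples of representations of $n$ and grouping by how many summands are shared among them, the main contribution comes from $k$-tuples of pairwise disjoint representations and is of order $\E[r_{h,A}(n)]^k\ll n^{-kh\delta}$. Markov's inequality then yields
$$\PP[r_{h,A}(n)>g]\ll g^{-k}\,n^{-kh\delta};$$
choosing $k>1/(h\delta)$ makes the right-hand side summable in $n$, so by Borel--Cantelli only finitely many $n$ satisfy $r_{h,A}(n)>g$ almost surely. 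Removing one element from each of these finitely many ``bad'' representations gives a subset of $A$ which is $B_h[g]$ and still satisfies $A(x)\gg x^{1/h-\e}$.

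The main obstacle is the $k$-th moment estimate in the concentration step: the indicators attached to different representations of $n$ are \emph{not} independent when two representations share a summand, and the resulting dependencies produce a zoo of intersection patterns. Showing that every non-disjoint pattern contributes a lower-order term than the disjoint configuration---and in particular that no combination of shared summands can cause the expansion to blow up---is the technical heart of the argument.
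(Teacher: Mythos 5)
Your setup (the random model with $\PP(n\in A)\asymp n^{-1+1/h-\delta}$, the Chernoff--Borel--Cantelli density estimate, and the expectation bound $\E[r_{h,A}(n)]\ll n^{-h\delta}$) coincides with the paper's second proof: these are Theorem \ref{densi} and Lemma \ref{Erla}. But the concentration step is where your argument stops being a proof. You assert that $\E[r_{h,A}(n)^k]\ll \E[r_{h,A}(n)]^k$ because ``the main contribution comes from $k$-tuples of pairwise disjoint representations,'' and then you yourself concede that verifying this over all intersection patterns ``is the technical heart of the argument.'' That concession is accurate: this moment estimate is precisely the point at which the naive extension of Erd\H{o}s--R\'enyi from $h=2$ to $h\ge 3$ fails, because representations sharing a summand give non-independent events, and for $k\ge 3$ overlapping tuples the linear constraints attached to the $k$ representations need not even be independent, so the ``each constraint saves a factor of $n$'' heuristic can lose factors of $n$. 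Without a lemma classifying these configurations and bounding each one, you have a plan, not a proof; this is exactly the gap in the original Erd\H{o}s--R\'enyi claim that the paper is written to address.

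The paper avoids the moment computation entirely. It only ever invokes independence for \emph{pairwise disjoint} representations, where $\PP(r^*_{h,A}(n)\ge s)\le \E(r_{h,A}(n))^s$ is immediate (Lemma \ref{Pfla}), so a random sequence is almost surely $B^*_h[g]$ for $g\gg \e^{-1}$. The passage from ``few disjoint representations'' to ``few representations'' is then purely combinatorial: Lemma \ref{cap} states $B^*_h[g]\cap B_{h-1}[k]\subseteq B_h[hkg]$, and an induction on $h$ (Proposition \ref{c_h}) supplies the $B_{h-1}[k]$ hypothesis, since the same random sequence is dense enough to fall under the induction for $h-1$. If you want to salvage your route, you must either prove the $k$-th moment bound by an explicit analysis of overlap patterns (essentially what Erd\H{o}s--Tetali and Vu do, via the sunflower lemma or polynomial concentration), or replace it with a disjointness-plus-combinatorics device like Lemma \ref{cap}. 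As written, the proposal leaves the one genuinely hard step unproved.
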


\smallskip

Probably they did not notice that when $h \ge 3$ two distinct
representations of an integer as a sum of $h$ numbers can share
common elements. If $x_1+\cdots +x_h=y_1+\cdots +y_h$ and $x_1=y_1$,
then the events $(x_1,\dots , x_h\in A)$ and $(y_1,\dots ,y_h\in A)$
are not independent (this cannot happen when $h=2$ since one equal
addend implies that the other is also equal). This phenomena makes
the cases $h\ge 3$  much more difficult than the case $h=2$.

\smallskip

In \cite{Vu} Vu gave the first correct proof of Theorem \ref{h}. He
used ideas from a paper of Erd\H os and Tetali \cite{ET} to solve a
similar problem for a related question. The key point is the use of
the \textit{``Sunflower lemma''} to prove that if an integer has
enough representations then we can select $g+1$ representations which are
disjoint. Now the probabilistic method works easily because we deal
with independent events. If we follow the details of the proof we
can see that Vu obtains $g_h(\e)\ll \e^{-h+1}$.

\smallskip

The aim of this paper is to present new proofs of Theorem \ref{h}
and to obtain better relations between $g$ and $\e$.

\smallskip

The first one consists of an explicit construction of the sequence
claimed in Theorem \ref{h}. We do it in Section 2.

\smallskip

The second one is a probabilistic but distinct and simpler proof
than that presented by Vu. We  do not use the \textit{``Sunflower
lemma''}, but a simpler one, and we get a better upper
bound for $g_h(\e)$. More precisely, we prove the next theorem in
Section 3.

\begin{theorem}\label{e}
For any $\e>0$ and $h \ge 2$, there exists $g=g_h(\e)\ll \e^{-1}$
and a $B_h[g]$ sequence $A$, such that $A(x)\gg x^{1/h-\e}$.
\end{theorem}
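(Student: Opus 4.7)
I take the usual probabilistic approach: include each positive integer $n$ in $A$ independently with probability $p_n = \min(1,\, c\, n^{1/h - 1 - \e})$ for a constant $c = c(h,\e)$. Chernoff-type estimates then show that $A(x) \gg x^{1/h - \e}$ holds for all large $x$ with high probability, so the real content lies in simultaneously controlling $r_{h,A}(n)$ for every $n$.

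\textbf{Counting the excess.} Rather than controlling the tails of $r_{h,A}(n)$ pointwise, I bound the total number of elements one must delete from $A$ in order to enforce the $B_h[g]$ condition. The elementary inequality $(m - g)_+ \leq \binom{m}{g+1}$ implies that this number is at most $\sum_n \binom{r_{h,A}(n)}{g+1}$, and a standard deletion step yields the theorem once this sum is $o(x^{1/h-\e})$ in expectation. Expanding the binomial coefficient rewrites the sum as a weighted count of $(g+1)$-tuples of pairwise distinct ordered representations of a common integer, weighted by the probability $\prod_a p_a$ over the union of their entries. Grouping the tuples by their \emph{shape} $\sigma$ --- the partition of the $h(g+1)$ slots induced by value coincidences --- I obtain, for $s(\sigma)$ the number of distinct values and $r(\sigma)$ the rank of the ``equal-sum'' constraint system on those values after one equation is absorbed by the sum over $n$, a contribution of size
\[
\ll x^{s(\sigma)(1/h - \e) - r(\sigma)}.
\]

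\textbf{The key lemma and the choice of $g$.} For the fully disjoint shape one has $s = h(g+1)$ and $r = g$, producing the exponent $1 - h(g+1)\e$, which is strictly less than $1/h - \e$ precisely when $g \gg_h \e^{-1}$, exactly the bound claimed. The lemma that replaces Vu's sunflower argument has the following flavour: no overlapping shape can dominate the disjoint one, because each identification of two slots costs a factor $x^{1/h - \e}$ in probability while it can save at most a factor $x$ by collapsing an equation, and the requirement that the $g+1$ representations be \emph{distinct} is precisely what prevents the rank of the constraint system from dropping faster than the number of coincidences. The main obstacle is to pin down this rank inequality in a form simple enough to sum over all shapes cleanly; the number of shapes is bounded in terms of $h$ and $g$ only, so once the inequality is established the rest is bookkeeping. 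With the lemma in hand one concludes that $\sum_n \E\bigl[\binom{r_{h,A}(n)}{g+1}\bigr] \ll x^{1 - h(g+1)\e} = o(x^{1/h - \e})$, and the deletion argument produces the $B_h[g]$ sequence of the claimed density.
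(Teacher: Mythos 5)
Your setup (the random sequence with $\PP(n\in A)\asymp n^{1/h-1-\e}$, Chernoff for the density, and the reduction of the deletion count to $\sum_n\binom{r_{h,A}(n)}{g+1}$) is sound, and the disjoint-shape computation correctly identifies the threshold $g\gg_h\e^{-1}$. The gap is your ``key lemma'': you never establish the rank inequality for overlapping shapes, and you say as much (``the main obstacle is to pin down this rank inequality''). This is not bookkeeping --- it is the entire difficulty of the theorem for $h\ge 3$, namely that overlapping representations give non-independent events. The heuristic you offer even points the wrong way: identifying two slots lowers the exponent by $1/h-\e$ (the saved probability factor) but can raise it by as much as $1$ if it collapses an equation, and $1>1/h-\e$; so ``no overlapping shape can dominate the disjoint one'' is precisely the nontrivial assertion. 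What has to be shown is that a unit drop in the rank of the forms $\mathrm{sum}(\bar{x}_i)-\mathrm{sum}(\bar{x}_1)$ forces enough identifications of slots to pay for itself, uniformly over all shapes; the remark that distinctness of the $g+1$ representations ``prevents the rank from dropping faster than the number of coincidences'' is an assertion, not an argument. (A secondary, more routine omission: the deletion has to preserve the density for \emph{all} large $x$ simultaneously, which needs a Borel--Cantelli argument over dyadic blocks, not just an expectation bound at a single $x$.)

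For comparison, the paper's proof avoids the shape analysis entirely. It only ever computes the probability of $s$ \emph{pairwise disjoint} representations (Lemma \ref{Pfla}), where the events are genuinely independent and the bound is the trivial $\E(r_{h,A}(n))^{s}$; the passage from ``few disjoint representations'' to ``few representations'' is done by the purely combinatorial Lemma \ref{cap}, $B^*_h[g]\cap B_{h-1}[k]\subseteq B_h[g(h(k-1)+1)]$, applied inductively in $h$ (Proposition \ref{c_h}), and no deletion is needed because with $g\ge 2/(h\e)$ the relevant tail sum already converges. That lemma is the paper's substitute both for Vu's sunflower lemma and for your rank inequality; to complete your route you would need either to prove the rank inequality over all shapes or to import a device of this kind.
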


Actually we can check in the proof of the theorem above that we can take
any $g_h(\e) \ge 2^{h-3}h(h-1)!^2\e^{-1}$. The improvement of this theorem
affects to the cases $h \ge 3$, where we have to deal with not
independent events. Vu's proof only gives $g_h(\e)\ll \e^{-h+1}$
which is worse than our bound when $h\ge 3$. For the case $h=2$
Erd\H os and R\'{e}nyi proved that any $g_2(\e) > \frac{1}{2 \e} - 1$
satisfies the condition of Theorem \ref{e} and the first author
\cite{C} used the ``alteration method'' to improve that bound to
$g_2(\e) > \frac{1}{4 \e} - \frac{1}{2}$.

\bigskip

In the last section we refine Theorem \ref{e} when $h=3$ proving that
$g_3(\e) > \frac{2}{9\e} - \frac{2}{3}$ works. In other words,

\begin{theorem} \label{boundge}
For every $\e > 0$ and for every $g \ge 1$ there is a $B_3[g]$
sequence $A$, such that
$$A(x) \gg x^{\frac{g}{3g+2} - \e}.$$
\end{theorem}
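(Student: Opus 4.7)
My plan is to use the alteration method, in the spirit of \cite{C} for $h=2$, adapted to $h=3$. Fix $g\ge 1$ and set $\alpha = g/(3g+2)$. For large $N$, form a random $A\subseteq[1,N]$ by including each integer independently with probability $p = cN^{\alpha-1}$, with $c=c(g)>0$ a small constant chosen at the end; then $\E[|A|]=pN=cN^{\alpha}$. I call a \emph{bad configuration} any choice of $g+1$ distinct unordered representations of some integer $n$ as $a_1+a_2+a_3$ with $a_i\in A$; removing one element per bad configuration leaves a set that is automatically $B_3[g]$.

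Let $Y$ count the bad configurations and decompose by the number $k$ of distinct elements involved, so that $\E[Y]=\sum_k N_k p^k$, where $N_k$ is the number of configurations with exactly $k$ distinct elements and $k$ ranges up to $3(g+1)$. The central estimate is $N_k\ll_g N^{k-g}$. Granting it, each summand becomes $\ll_g c^k N^{k\alpha-g}$; since $k\alpha-g$ is increasing in $k$ and equals $\alpha$ at $k=3(g+1)$, one obtains $\E[Y]\ll_g c^{3(g+1)} N^{\alpha}$. Choosing $c$ small enough that this is at most $cN^{\alpha}/2$ forces $\E[|A|-Y]\ge (c/2)N^{\alpha}$. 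The probabilistic method then yields an $A$ with $|A|-Y\ge(c/2)N^{\alpha}$, and deleting one element per bad configuration produces the desired $B_3[g]$ subset of $[1,N]$ of size $\gg_g N^{\alpha}$. The explicit threshold $g_3(\e)>\frac{2}{9\e}-\frac{2}{3}$ is then just the algebraic identity $\alpha=1/3-2/(9g+6)$. Finally, one passes from this one-scale bound to an infinite sequence with $A(x)\gg x^{\alpha-\e}$ either by dyadic pasting or by rerunning the calculation with a variable probability $p_n=cn^{\alpha-1}$; the $\e$ slack absorbs the boundary corrections.

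The combinatorial estimate $N_k\ll_g N^{k-g}$ is the heart of the argument and where I expect the work to concentrate. For the generic pattern (pairwise disjoint triples, $k=3(g+1)$) it is immediate by direct enumeration. For overlapping patterns, I would view the $g+1$ triples as a $3$-uniform hypergraph on $k$ labelled vertices and count integer assignments to the vertices together with $n$, subject to the $g+1$ sum equations. Processing the triples in a canonical order and letting $n_i$ denote the number of new vertices introduced by the $i$-th triple, a greedy degrees-of-freedom argument gives $1+\sum_i(n_i-1)=k-g$ free parameters whenever every triple introduces at least one new vertex. The main obstacle is the case where some triple uses only previously-seen elements ($n_i=0$), since its sum equation might appear to be automatically satisfied. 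Here I would use the elementary fact that two distinct multisets of size three with the same sum share at most one element as a multiset; a short case analysis on the admissible overlap patterns then shows that either the sum equation is genuinely new (preserving the $k-g$ count) or the distinctness requirements force two of the triples to coincide and the configuration disappears, yielding $N_k\ll_g N^{k-g}$ uniformly after absorbing the finitely many hypergraph structures on $k$ vertices into the $g$-dependent implied constant.
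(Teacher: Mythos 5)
Your overall plan (random selection followed by deletion of elements from over-represented sums) is the same alteration philosophy as the paper's Section~4, but your execution attacks head-on exactly the point that the paper is careful to avoid, and the central estimate is false as stated. Take $g=17$, let $v_i=a+i\delta$ for $0\le i\le 12$, and consider the $18$ triples $\{v_i,v_j,v_l\}$ with $i<j<l$ and $i+j+l=18$: they are pairwise distinct, all represent $n=3a+18\delta$, and together they use $k=13$ distinct integers. This configuration occurs for every admissible pair $(a,\delta)$, so $N_{13}\gg N^2$, while your claimed bound is $N^{k-g}=N^{-4}$. The same example refutes your dichotomy for triples with $n_i=0$: the $17$ sum equations have rank at most $11$ (the solution set contains the two-parameter family of arithmetic progressions), so several triples introduce neither a new vertex nor a new equation, and yet no two triples coincide. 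What your argument actually needs is the weaker bound $N_k\ll N^{k(1-\alpha)+\alpha}$, i.e.\ a lower bound $\lceil(k-1)\alpha\rceil$ on the rank of the linear system attached to each overlap pattern; your greedy degrees-of-freedom count only yields rank at least $\lceil k/3\rceil-1$, which is enough when $k\ge 3g+1$ but falls short of $\lceil(k-1)\alpha\rceil$ for a range of smaller $k$ (already for $k=3g$). Closing that gap requires ruling out specific affine dependencies among same-sum triples by a quantitative use of the ``share at most one element'' fact, for all $g$ and all patterns --- this is precisely the difficulty that makes $h\ge3$ harder than $h=2$, and it is the part you have left as a sketch. (For $g\le 3$ one can check by hand that no dependencies occur, but the theorem is claimed for every $g$.) A secondary issue: pasting $B_3[g]$ sets from dyadic blocks does not produce a $B_3[g]$ sequence, since a sum $a_1+a_2+a_3$ may draw its terms from different blocks; you must run the variable-probability version from the start, as the paper does.

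The paper never counts overlapping configurations at all. It bounds only the number of pairwise \emph{disjoint} representations (Lemma~\ref{Pfla} and Theorem~\ref{Btilde*hg}), for which the relevant probability factors exactly and no rank questions arise, and it shows that the same random sequence in $S(2/3+\delta,m)$ is, after deleting $o(A(x))$ elements, also a Sidon set. The elementary observation of Remark~\ref{remarkcap} --- in a Sidon set two distinct representations of $n$ as a sum of three elements cannot share exactly one element, hence all distinct representations are pairwise disjoint --- then upgrades the disjoint-representation bound to the full $B_3[g]$ property via Lemma~\ref{captilde}. If you wish to keep your direct route you must either prove the rank lower bound $\lceil(k-1)\alpha\rceil$ for every pattern, or restrict your configuration count to pairwise disjoint families and add the Sidon condition, which is exactly the paper's argument.
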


It is also possible to refine Theorem \ref{e} for $h\ge 4$ using the
``alteration method'' but the exponents we would obtain in these cases
are not satisfactory enough. For ``satisfactory enough'' we mean exponents
such that when we particularize to $g=1$, we obtain the same exponent that
we get with the greedy algorithm. That is what happens for $h=2$ (\cite{C})
and $h = 3$ (Theorem \ref{boundge}).

\section{A constructive proof of Theorem \ref{h}}
Given $h$ and $\e$, we construct a sequence $A$ with $r_{h,A}(n)$ bounded and we
will prove that $A(n) > n^{1/h-{\varepsilon}}$ for sufficiently
large $n$, which implies Theorem \ref{h}.

\bigskip

We use the representation of natural numbers in a number system with
variable base. It is easy to see that every natural number $x$ can
be expressed uniquely in the form
\begin{equation*}
x = b_{0} + b_{1}q_{1} + b_{2}q_{1}q_{2} + \cdots + b_{s}q_{1}
\cdots q_{s} + \cdots,
\end{equation*}
\noindent where $0 \le b_{i} < q_{i+1}$. The $b_{i}$'s and $q_{i}$'s
are natural numbers, $b_{i}$'s called the ``digits'' and $q_{i}$'s
called the ``bases''.

\bigskip

We consider $l\ge 2$, a large enough number that will be fixed
later. We fix the sets $0 \in A_{i} \subset \left[0,
\frac{q_{i}}{h}\right)$ such that the $A_{i}$'s are maximal sets
with the condition $r_{h,A_i}(n) \le 1$ for every $n$. It is
known (see for example \cite{HR}) that if $p$ is a prime, there is a
$B_h[1]$ set in $[0,p^h-2]$ with $p$ elements. Combining this result
with Bertrand's postulate, we can assure that
\begin{equation}
\label{sizeofAi}|A_i|>\frac 12\left (\frac{q_i}h\right )^{1/h}.
\end{equation}

Now we construct the set $A$ in
the following way: put that natural numbers in $A$ which digits
$b_{i} \in A_{i+1}$, and for which there is an $m$ such that $b_{i}
= 0$ for $i \not \in [m+1, \dots{}, m+l]$.

\bigskip

First we prove that $r_{h,A}(n) < (h!)^{lh}$. We add up $h$ numbers, $a_1$, $a_2$, \ldots, $a_h$.

Since the $j$-th digit of each addend is in $\left [0, \frac{q_j}{h}
\right )$, each digit of the sum will be the sum of the $j$-th
digits of the $a_i$'s (in other words, there will be no carries).

And, since the $j$-th digit of each addend is in a $B_h[1]$ set, the $j$-th digit of the sum can be obtained in only one way as a sum of $h$ digits.
Note that $h$ numbers have $h!$ permutations, so for each digit of the sum we could have the corresponding digits of the $h$ addends distributed in at
most $h!$ ways.

Finally, observe that the sum of the number of non zero digits of all the addends is less than or equal to $hl$, so the number of digits of the sum
different from zero will also be less than or equal to $hl$, and finally we will have $r_{h, A} (n) \le (h!)^{lh}$ for every $n$.

\bigskip

Now, we give an estimation of the value of $A(n)$. Given $n$, we know that there
exists $j$ such that
\begin{equation} \label{q1qj}
q_{1}q_{2} \cdots q_{j} \le n < q_{1}q_{2} \cdots q_{j+1}.
\end{equation}

It is clear that those integers which digits
\[
b_{0} = b_{1} = \cdots = b_{j-l-1} = 0 \ \text{ and } \  b_{i} \in
A_{i+1}, \ i = j - l, \dots{}, j - 1,
\]
\noindent are in $A$. Let $N$ denote the number of such integers. We
define $r = \dfrac{\log_2 l}{l}$ and, for $i \ge 1$,
\begin{equation} \label{qi}
q_{i} = \lfloor e^{(1+r)^{i-1}} \rfloor.
\end{equation}

Since $\frac {e^{(1+r)^{i-1}}}2 \le q_{i}\le e^{(1+r)^{i-1}}$ and
$2(2h)^{1/h}\le 2e^{2/e}<e^2$, inequality (\ref{sizeofAi})
implies
\begin{equation}\label{Ai}
|A_{i}|>\frac 12\left (\frac{e^{(1+r)^{i-1}}}{2h}\right
)^{1/h}>e^{\frac{(1+r)^{i-1}}h-2}.
\end{equation}

\bigskip

First we give an upper bound for $\log n$. It follows from
(\ref{q1qj}) and (\ref{qi}) that
\begin{equation} \label{logqn}
\log n < \log (q_{1} \cdots q_{j+1}) \le 1 + (1+r) + \cdots +
(1+r)^{j} < \frac{(1+r)^{j+1}}{r}.
\end{equation}

\bigskip

In the next step we will give a lower estimation for $\log N$.
Applying (\ref{Ai}) we have
\begin{eqnarray} \label{logqN}
\log N=\sum_{i=j-l+1}^{j}\log |A_i| & > & \frac{(1+r)^{j-l} +
\cdots + (1+r)^{j-1}}{h}-2l \nonumber \\
& = & \frac{(1+r)^{j}}{hr}(1 - (1+r)^{-l}) - 2l.
\end{eqnarray}

In view of (\ref{logqn}) and (\ref{logqN}) we have
\begin{eqnarray} \label{hlN/ln}
\frac{h\log N}{\log n} & > & \frac{(1+r)^{j}(1 - (1+r)^{-l})
}{(1+r)^{j+1}}- \frac{2lrh}{(1+r)^{j+1}} \nonumber \\
& = & \frac{1 - (1+r)^{-l}}{1 + r} - \frac{2lrh}{(1+r)^{j+1}}.
\end{eqnarray}

Using that $\frac 1{1+r}>1-r$ and that $\left (1+\frac{\log_2
l}l\right )^{\frac l{\log_2 l}}\ge 2$ for any $l\ge 2$, we have
\begin{eqnarray}\label{des}
\frac{1 - (1+r)^{-l}}{1 + r}&>&1-r-(1+r)^{-l} =1-\frac{\log_2
l}l- \left( 1+\frac{\log_2 l}l \right)^{-l}\nonumber\\ &>&1-\frac{\log_2 l}l-\frac
1l>1-\frac {2\log_2 l}l.
\end{eqnarray}

On the other hand, since
$
\lim_{j\to\infty}\frac{2lrh}{(1+r)^{j+1}} = 0
$
we have that, for sufficiently large $j$,
\begin{equation}\label{j}
\frac{2lrh}{(1+r)^{j+1}} < \frac {\log_2 l}l.
\end{equation}

Finally, from (\ref{hlN/ln}), (\ref{des}) and (\ref{j}) we have
\[
\frac{h\log N}{\log n} > 1 - \frac {3\log_2 l}l,
\]
\noindent for sufficiently large $n$.

We finish the proof of Theorem \ref{h} taking, for a given $\e>0$,
a large enough integer $l$ such that $\frac {3\log_2 l}l<h\e$, because
then $\log N > \left( \frac{1}{h} - \e \right) \log n$, i. e. $N > n^{1/h - \e}$.

\smallskip

Just a little comment about the dependence of $g$ on $\e$. Observe
that our $g$ is $(h!)^{lh}$ and that, given $\e$, we need to choose
a large value of $l$, say $l\gg \e^{-1}\log \e^{-1}$. This makes the
dependence of $g$ on $\e$ very bad. The value of $g$ we get with
this construction depends more than exponentially on $\e^{-1}$. We
will try to improve this in the next section and, for the case $h =
3$, even more in the last one.

\smallskip

Note that in \cite{Ci2} we can find an explicit Sidon sequence with
$A(x) \gg x^{1/3-o(1)}$. But this construction can not be
generalized to get dense $B_h[g]$ sequences.

\section{A new probabilistic proof of Theorem \ref{h}}

\begin{definition}Given $0<\alpha<1$ we define $S(\alpha,m)$ as the probability
space of the sequences of positive integers defined by $$P(x\in
A)=\begin{cases} 0 &\text{ if } x<m
\\ x^{-\alpha} &\text{ if } x\ge m\end{cases}.$$
\end{definition}

\begin{theorem} \label{densi}
For any $m$, a random sequence $A$ in $S(\alpha,m)$ satisfies
$A(x)\gg x^{1-\alpha}$ with probability $1$.
\end{theorem}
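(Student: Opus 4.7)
The plan is to prove the statement by a standard first-and-second-moment computation combined with Borel--Cantelli applied along a geometric subsequence. Write $A(x)=\sum_{n=m}^{x}\mathbf{1}_{n\in A}$ as a sum of independent Bernoulli indicators with $P(n\in A)=n^{-\alpha}$ for $n\ge m$.

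First I would compute the expectation and variance. By comparison with the integral $\int n^{-\alpha}\,dn$, one has
\[
\E[A(x)] \;=\; \sum_{n=m}^{x} n^{-\alpha} \;=\; \frac{x^{1-\alpha}}{1-\alpha}\,(1+o(1)),
\]
while the independence of the indicators gives
\[
\mathrm{Var}[A(x)] \;=\; \sum_{n=m}^{x} n^{-\alpha}(1-n^{-\alpha}) \;\le\; \E[A(x)].
\]
Chebyshev's inequality then yields
\[
P\!\left(A(x) < \tfrac{1}{2}\E[A(x)]\right) \;\le\; \frac{4\,\mathrm{Var}[A(x)]}{\E[A(x)]^{2}} \;\le\; \frac{4}{\E[A(x)]} \;\ll\; x^{\alpha-1}.
\]

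The next step is to upgrade this ``probability tending to $0$'' statement into an almost-sure statement by restricting to a geometric subsequence. Set $x_k=2^k$; then $\sum_k x_k^{\alpha-1}=\sum_k 2^{k(\alpha-1)}<\infty$ since $\alpha<1$. By the Borel--Cantelli lemma, almost surely the event $\{A(x_k)<\tfrac12\E[A(x_k)]\}$ occurs for only finitely many $k$. Hence, with probability $1$, for all sufficiently large $k$,
\[
A(x_k) \;\ge\; \tfrac{1}{2}\E[A(x_k)] \;\gg\; x_k^{1-\alpha}.
\]

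Finally I would extend from the subsequence to all $x$ using monotonicity of $A(\cdot)$. Given any large $x$, choose $k$ with $x_k\le x<x_{k+1}=2x_k$; then $A(x)\ge A(x_k)\gg x_k^{1-\alpha}\ge 2^{\alpha-1}\,x^{1-\alpha}\gg x^{1-\alpha}$, which is the desired conclusion. The calculation itself is routine; the only subtlety is the passage from ``for each fixed $x$, high probability'' to ``with probability $1$, for all large $x$'', and the geometric-subsequence plus monotonicity trick resolves this cleanly.
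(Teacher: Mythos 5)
Your proof is correct, but it takes a genuinely different (and more elementary) route than the paper. The paper also starts from $\E[A(x)]=\frac{x^{1-\alpha}}{1-\alpha}+O(1)$, but then invokes Chernoff's inequality for sums of independent Boolean variables, obtaining $\PP\bigl(A(x)\le\tfrac12\E[A(x)]\bigr)\le 2e^{-\frac{1}{16}\E[A(x)]}$; this exponential decay is summable over \emph{all} integers $x$, so Borel--Cantelli applies directly and no subsequence argument is needed. You instead use only the second moment: independence gives $\mathrm{Var}[A(x)]\le\E[A(x)]$, and Chebyshev yields the polynomial bound $\ll x^{\alpha-1}$, which is not summable over all $x$ (since $\alpha-1\in(-1,0)$), so you correctly compensate by passing to the geometric subsequence $x_k=2^k$ and then recovering all $x$ from the monotonicity of $A(\cdot)$ at the cost of a constant factor $2^{\alpha-1}$. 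Both arguments are complete. What your approach buys is weaker hypotheses -- the second-moment method needs only pairwise independence and no exponential concentration machinery; what the paper's approach buys is brevity, since summability over every integer $x$ makes the subsequence-plus-monotonicity step unnecessary. One stylistic remark: your displayed Chebyshev step silently uses that $A(x)<\tfrac12\E[A(x)]$ implies $|A(x)-\E[A(x)]|\ge\tfrac12\E[A(x)]$, which is fine but worth stating, exactly as the paper does for its Chernoff step.
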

\begin{proof}
First of all, we calculate
$$\E(A(x)) = \sum_{n \le x} \PP(n \in A) = \sum_{m \le n \le x} n^{- \alpha} = \frac{x^{1 - \alpha}}{1 - \alpha} + O_{\alpha, m}(1),$$
when $x \to \infty$. Now, we use Chernoff's Lemma\footnote{Let
$X=t_1+\cdots +t_n$ where the $t_i$ are independent Boolean random
variables. \textbf{Chernoff's Lemma} says that for every $0 < \e <
2$, $\PP(|X- \E(X)| \ge \e \E(X)) \le 2e^{-\e^2  \E(X) / 4}$.} to
get
$$\PP\left (A(x) \le \tfrac{1}{2} \E\left (A(x)\right )\right ) \le \PP \left (|A(x) - \E(A(x))| \ge \tfrac{1}{2} \E(A(x))\right ) \le
2 e^{- \frac{1}{16}\left (\frac{x^{1 - \alpha}}{1 - \alpha} +
O(1)\right )}.$$ Since $\sum_{x = 1}^{\infty} 2 e^{-
\frac{1}{16}\left (\frac{x^{1 - \alpha}}{1 - \alpha} + O(1)\right )}
< \infty$, Borel-Cantelli Lemma\footnote{\textbf{Borel-Cantelli
Lemma.} Let $(E_n)_{n=1}^{\infty}$ be a sequence of events in a
probability space. If $\sum_{n=1}^{\infty} \PP(E_n) < \infty$ then
the probability that infinitely many of them occur is $0$.}
 says that $A(x) \gg  x^{1-\alpha}$ with probability 1.
\end{proof}

\begin{notation}
We denote the set which elements are the coordinates of the vector
$\bar{x}$ as $Set(\bar{x})$. Of course, if two or more coordinates of $\bar{x}$ are equal, this value appears only once in
$Set(\bar{x})$.
\end{notation}

\begin{notation}
We define
$$R_h(n) = \{ (n_1, n_2, \ldots, n_h) \ | \ n=n_1+\cdots +n_h,\quad n_1\le \cdots \le n_h,\quad n_i \in \N \}.$$
\end{notation}

\begin{lemma} \label{Erla} For a sequence $A$ in $S(\alpha,m)$, for every $h$ and $n$
$$\E(r_{h,A}(n)) \le C_{h,\alpha} \  n^{h(1-\alpha) - 1}$$
where $C_{h, \alpha}$ depends only on $h$ and $\alpha$.
\end{lemma}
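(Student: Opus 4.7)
The plan is to expand the expectation over ordered tuples in $R_h(n)$, group them by multiplicity pattern, and control each class by a weighted lattice-point estimate on a hyperplane.

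First I would write $\E(r_{h,A}(n)) = \sum_{\bar n\in R_h(n)} \PP(n_1\in A,\ldots,n_h\in A)$. Since the events $\{x\in A\}$ for distinct $x$ are independent and a single event is idempotent, the joint probability equals $\prod_{x\in Set(\bar n)} x^{-\alpha}$ whenever every coordinate is at least $m$ (and vanishes otherwise); in particular it is always bounded above by this product. Thus repeated coordinates contribute only a single factor $x^{-\alpha}$ rather than one per occurrence, which is exactly the source of difficulty compared with the independent case.

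Next I would parametrize each $\bar n\in R_h(n)$ by the number $k$ of distinct values together with the composition $(m_1,\ldots,m_k)$ of $h$ recording the multiplicities in increasing-value order. Dropping the strict ordering on the distinct values $y_i$ yields
$$\E(r_{h,A}(n)) \le \sum_{k=1}^h\ \sum_{\substack{m_1+\cdots+m_k=h\\ m_i\ge 1}}\ \sum_{\substack{y_1,\ldots,y_k\ge 1\\ m_1y_1+\cdots+m_ky_k=n}} y_1^{-\alpha}\cdots y_k^{-\alpha}.$$
The core step is to prove that, for each fixed $k$ and composition, the innermost sum is $\ll_{h,\alpha} n^{k(1-\alpha)-1}$; I would do this by induction on $k$. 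The base case $k=1$ is immediate since $y_1=n/m_1$ is forced, contributing $O(n^{-\alpha})$. For the inductive step, fix $y_k$; by induction the sum over $y_1,\ldots,y_{k-1}$ constrained by $\sum_{i<k} m_iy_i=n-m_ky_k$ is $O\bigl((n-m_ky_k)^{(k-1)(1-\alpha)-1}\bigr)$. I then split the $y_k$-sum at $n/(2m_k)$: on the lower half $n-m_ky_k\asymp n$ while $\sum y_k^{-\alpha}\ll n^{1-\alpha}$, and on the upper half $y_k^{-\alpha}\ll n^{-\alpha}$ while the substitution $u=n-m_ky_k$ turns the residual into $O(n^{(k-1)(1-\alpha)})$. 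Both halves give $n^{k(1-\alpha)-1}$.

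Finally, since $1-\alpha>0$ the exponent $k(1-\alpha)-1$ is largest at $k=h$, and there are only $O_h(1)$ pairs $(k,(m_1,\ldots,m_k))$; summing absorbs everything into a single constant $C_{h,\alpha}$ and yields the claimed bound. The only real obstacle is the inductive hyperplane estimate: the equality constraint must be exploited to save a factor of $n$ over the naive box bound, and one has to verify that both halves of the $y_k$-split balance—a routine exponent calculation but one where care is needed to avoid losing a power.
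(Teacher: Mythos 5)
Your proposal is correct, and the setup (expanding $\E(r_{h,A}(n))$ as a sum over representations, reducing repeated coordinates to a single factor $x^{-\alpha}$, and grouping by the number of distinct values) is the same as the paper's. Where you diverge is in the core estimate. The paper dispatches the innermost sum with a one-line observation: the largest coordinate of any $\bar x\in R_h(n)$ is at least $n/h$, so one factor is at most $(n/h)^{-\alpha}$, and the remaining $j-1$ distinct values are then summed \emph{freely} over $[1,n)$, giving $\left(\sum_{x<n}x^{-\alpha}\right)^{j-1}\le \left(\frac{n^{1-\alpha}}{1-\alpha}\right)^{j-1}$ and hence $n^{j(1-\alpha)-1}$ per class with no induction at all. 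You instead keep the full linear constraint $m_1y_1+\cdots+m_ky_k=n$ and prove the same per-class exponent $n^{k(1-\alpha)-1}$ by induction on $k$ with a dyadic split of the last variable --- essentially iterating the convolution bound $\sum_{a+b=n}a^{-\alpha}b^{-\alpha}\ll n^{1-2\alpha}$. Your argument is checked correctly (the endpoint $u=n-m_ky_k$ never reaches $0$ because the remaining variables are positive, and both halves of the split do balance to $n^{k(1-\alpha)-1}$), and it is the more systematic route, applicable when no single coordinate is forced to be large; but for this lemma it buys nothing over the paper's shortcut, since both methods land on exactly the same exponent and the lemma only needs the $k=h$ term to dominate.
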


\begin{proof}
\begin{eqnarray*}
\E(r_{h, A}(n)) & = & \sum_{\bar{x} \in R_h(n)} \prod_{x \in
Set(\bar{x})} \PP(x \in A) \\
& = & \sum_{j=1}^{h} \sum_{\substack{\bar{x} \in R_h(n) \\
|Set(\bar{x})| = j}} \prod_{x \in Set(\bar{x})} \PP(x \in A).
\end{eqnarray*}
Since the largest element of every $\bar{x} \in R_h(n)$ is $\ge n/h$
we have
\begin{eqnarray*}
\E(r_{h, A}(n)) & \le & \left( \frac{n}{h}\right)^{- \alpha}
\sum_{j=1}^{h} \left( \sum_{x < n} x^{- \alpha} \right)^{j-1} \\
& \le & \left( \frac{n}{h}\right)^{- \alpha} \sum_{j=1}^{h} \left(
\int_{0}^{n} x^{- \alpha} \ dx \right)^{j-1}\\
 & \le & \left( \frac{n}{h}\right)^{- \alpha}
\sum_{j=1}^{h} \left( \frac{n^{1 - \alpha}}{1 - \alpha} \right)^{j-1} \\
& \le & C_{h, \alpha} \  n^{h(1-\alpha) - 1}.
\end{eqnarray*}
\end{proof}

\begin{definition}
We say that two vectors $\bar{x}$ and $\bar{y}$ are disjoint if
$Set(\bar{x})$ and $Set(\bar{y})$ are disjoint sets. We define
$r^*_{l, A} (n)$ as the maximum number of pairwise disjoint
representations of $n$ as sum of $l$ elements of $A$, i. e. the
maximum number of pairwise disjoint vectors of $R_{l}(n)$ with
their coordinates in $A$. We say that $A$ is a $B^*_l[g]$ sequence
if $r^*_{l,A}(n)\le g$ for every $n$.
\end{definition}

\begin{lemma} \label{Pfla} For a sequence $A$ in $S(\alpha,m)$, for every $h$ and $n$
$$\PP(r^*_{h, A}(n) \ge s) \le C_{h,\alpha, s} \  n^{(h(1-\alpha) - 1)s}$$
where $C_{h, \alpha, s}$ depends only on $h$, $\alpha$ and $s$.
\end{lemma}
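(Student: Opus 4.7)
The plan is to exploit the very reason the notion of $r^*$ was introduced: when the vectors $\bar{x}_1,\dots,\bar{x}_s\in R_h(n)$ are \emph{pairwise disjoint}, the underlying sets $Set(\bar{x}_i)$ are disjoint, so the events ``$\bar{x}_i\subset A$'' are mutually independent in the probability space $S(\alpha,m)$. This independence is exactly what was missing for $r_{h,A}(n)$ in Lemma \ref{Erla}, and recovering it is the entire point of passing to $r^*$.

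Concretely, I would begin with a union bound,
$$\PP(r^*_{h,A}(n)\ge s)\le \sum_{\{\bar{x}_1,\dots,\bar{x}_s\}}\PP\bigl(\bar{x}_1\subset A,\dots,\bar{x}_s\subset A\bigr),$$
where the sum is over unordered $s$-subsets of pairwise disjoint vectors of $R_h(n)$. By independence,
$$\PP\bigl(\bar{x}_1\subset A,\dots,\bar{x}_s\subset A\bigr)=\prod_{i=1}^{s}\prod_{x\in Set(\bar{x}_i)}\PP(x\in A).$$
Passing to ordered $s$-tuples (paying a factor $1/s!$) and then dropping the disjointness constraint only enlarges the sum, giving
$$\PP(r^*_{h,A}(n)\ge s)\le \frac{1}{s!}\left(\sum_{\bar{x}\in R_h(n)}\prod_{x\in Set(\bar{x})}\PP(x\in A)\right)^{\!s}=\frac{\bigl(\E(r_{h,A}(n))\bigr)^{s}}{s!}.$$

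To finish, I would substitute the bound $\E(r_{h,A}(n))\le C_{h,\alpha}\,n^{h(1-\alpha)-1}$ supplied by Lemma \ref{Erla}, obtaining the desired estimate with $C_{h,\alpha,s}=C_{h,\alpha}^{s}/s!$. There is essentially no technical obstacle here; the only subtlety worth flagging is that independence genuinely requires the $Set(\bar{x}_i)$ to be \emph{disjoint}, not merely the tuples $\bar{x}_i$ to be distinct — if two vectors shared even a single element, the factorisation of the probability would fail. This is precisely the pitfall that the definition of $r^*_{h,A}(n)$ was designed to avoid, which is why the argument becomes as clean as in the $h=2$ case.
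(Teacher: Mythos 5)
Your proposal is correct and follows essentially the same route as the paper: a union bound over families of pairwise disjoint representations, factorization of the probability via the independence that disjointness provides, relaxation to the $s$-th power of $\E(r_{h,A}(n))$, and an appeal to Lemma \ref{Erla}. You are in fact slightly more careful than the paper, which writes the initial union bound as an equality and omits the harmless $1/s!$ factor.
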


\begin{proof}
Using the independence given by the pairwise disjoint condition
\begin{eqnarray*}
\PP(r^*_{h, A}(n) \ge s) & = & \sum_{\substack{\{\bar{x}_1, \ldots,
\bar{x}_s \} \\ \bar{x}_i \in R_h(n) \\ \bar{x}_1, \ldots,
\bar{x}_s \\ \text{ pairwise disjoint}}} \prod_{i=1}^{ s } \prod_{x \in
Set(\bar{x}_i)} \PP(x \in
\A) \\
& \le & \left( \sum_{\bar{x} \in R_h(n)} \prod_{x \in Set(\bar{x})}
\PP(x \in \A) \right)^{ s }\\
& = & E(r_{h, A}(n))^{ s }
\end{eqnarray*}
and using Lemma \ref{Erla} we conclude the proof.
\end{proof}

\bigskip
\begin{proposition} \label{fla}
Given $h \ge 2$ and $0<\e<1/h$, a random sequence in $S(1 - \frac{1}{h} + \e, m)$
is a $B^*_h [g]$ sequence for every $g \ge \frac{2}{h \e}$ with probability $1-O(\frac{1}{m})$.
\end{proposition}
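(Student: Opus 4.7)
The plan is to prove Proposition \ref{fla} by applying Lemma \ref{Pfla} with $s = g+1$ and then summing over all $n$.

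First I would compute the relevant exponent. With $\alpha = 1 - \tfrac{1}{h} + \e$, we have
\[
h(1-\alpha) - 1 = h\left(\tfrac{1}{h} - \e\right) - 1 = -h\e,
\]
so Lemma \ref{Pfla} applied with $s = g+1$ gives
\[
\PP(r^*_{h,A}(n) \ge g+1) \le C_{h,\alpha,g+1}\, n^{-h\e(g+1)}.
\]

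Next I would use a union bound. The event that $A$ fails to be a $B^*_h[g]$ sequence is the event that $r^*_{h,A}(n) \ge g+1$ for some $n$. Since every element of $A$ is $\ge m$, only values $n \ge hm$ can contribute, so
\[
\PP(A \text{ is not } B^*_h[g]) \le \sum_{n \ge hm} \PP(r^*_{h,A}(n) \ge g+1) \le C_{h,\alpha,g+1} \sum_{n \ge hm} n^{-h\e(g+1)}.
\]
The hypothesis $g \ge \tfrac{2}{h\e}$ gives $h\e(g+1) \ge 2 + h\e > 1$, so the sum converges and is bounded by a constant times $(hm)^{1 - h\e(g+1)} \le (hm)^{-1 - h\e}$. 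This yields $O(1/m)$, which is exactly the claimed bound.

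The argument is essentially an exercise in choosing the density parameter $\alpha$ to balance Lemma \ref{Pfla} against the convergence of the tail sum, so there is no real obstacle — the condition $g \ge 2/(h\e)$ is precisely what is needed to push $h\e(g+1)$ safely past $1$ (in fact past $2$), which both makes the series convergent and yields the quantitative error term $O(1/m)$ rather than merely $o(1)$. The one thing I would be careful about is verifying that the constant $C_{h,\alpha,g+1}$ from Lemma \ref{Pfla} is uniform in $n$ so that it can be pulled out of the sum, which it is by the statement of the lemma.
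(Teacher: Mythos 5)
Your proposal is correct and follows essentially the same route as the paper: apply Lemma \ref{Pfla} with $s=g+1$, note that $h(1-\alpha)-1=-h\e$ for $\alpha=1-\frac{1}{h}+\e$, and sum the resulting bound $C\,n^{-h\e(g+1)}$ over the admissible $n$, using $g\ge \frac{2}{h\e}$ to force the exponent past $2$ and obtain a tail of size $O(1/m)$. The only (harmless) difference is that you start the sum at $n\ge hm$ where the paper starts at $n\ge m$; both give the claimed bound.
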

\begin{proof}
>From Lemma \ref{Pfla}, and taking into account the value of
$\alpha= 1 -\frac{1}{h} +\e$, we have
$$\PP(r^*_{h, A}(n) \ge g+1) \le C_{h, \e, g} n^{- h \e (g+1)}.$$
Since $r^*_{h, A}(n)=0$ for $n<m$ we have
\begin{eqnarray*}
\PP \left (r^*_{h,A}(n)\ge  g+1 \text{ for some } n \right ) & \le &
\sum_{n\ge m} \PP \left (r^*_{h, A}(n) \ge g+1 \right )\\
& \le & \sum_{n\ge m}C_{h, \e}n^{-h\e (g+1)}.
\end{eqnarray*}
If $g\ge \frac 2{h\epsilon}$, the last sum is $O(1/m)$. Thus, if it
is the case,
\begin{eqnarray*}
\PP \left (r^*_{h,A}(n) \le g \text{ for every } n \right ) & = & 1 - \PP \left( r^*_{h,A}(n) \ge  g+1 \text{ for some } n \right) \\
& \ge & 1 - O \left ( \frac{1}{m} \right ).
\end{eqnarray*}
\end{proof}

The next ``simple'' lemma is the key idea of the proof of
Theorem \ref{e}.

\begin{lemma} \label{cap}
$$B^*_h[g] \cap B_{h-1}[k] \subseteq B_h[hkg].$$
\end{lemma}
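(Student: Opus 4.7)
The plan is to take a maximal pairwise disjoint collection of $h$-representations of $n$ and bound all other representations by their intersection with this collection.

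Fix $A \in B^*_h[g] \cap B_{h-1}[k]$ and a positive integer $n$. First I would choose a maximal family $\bar{x}_1, \ldots, \bar{x}_s \in R_h(n)$ of pairwise disjoint representations of $n$ with coordinates in $A$. The $B^*_h[g]$ hypothesis gives $s \le g$, and maximality guarantees that every representation $\bar{y}$ of $n$ (including the $\bar{x}_i$ themselves) must share at least one element with some $\bar{x}_i$, i.e.\ $Set(\bar{y}) \cap U \neq \emptyset$ where $U = \bigcup_{i=1}^{s} Set(\bar{x}_i)$. Since each $Set(\bar{x}_i)$ has at most $h$ elements, $|U| \le hs \le hg$.

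Next I would bound, for a fixed $a \in U$, the number of representations $\bar{y} \in R_h(n)$ with $a \in Set(\bar{y})$. Given such a $\bar{y} = (y_1, \ldots, y_h)$, removing one coordinate equal to $a$ produces an ordered tuple in $R_{h-1}(n-a)$ with coordinates in $A$. This map is injective (the original $\bar{y}$ is recovered by re-inserting $a$ in its sorted position), so by the $B_{h-1}[k]$ hypothesis there are at most $r_{h-1,A}(n-a) \le k$ such $\bar{y}$.

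Summing over the at most $hg$ elements of $U$ gives
\[
r_{h,A}(n) \le \sum_{a \in U} \#\{\bar{y} \in R_h(n) : a \in Set(\bar{y}), \ Set(\bar{y}) \subseteq A\} \le hg \cdot k = hkg,
\]
which is exactly the desired inclusion. The only subtle point is the injectivity check in the previous paragraph, but since we are dealing with ordered tuples satisfying $y_1 \le \cdots \le y_h$ and simply deleting one fixed occurrence of $a$, this is immediate and should not be a real obstacle.
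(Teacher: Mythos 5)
Your proof is correct and rests on the same idea as the paper's: a maximal (greedy) family of pairwise disjoint representations, of size at most $g$ by the $B^*_h[g]$ hypothesis, such that every representation meets its support, combined with the $B_{h-1}[k]$ bound on the number of representations through a fixed element. The paper phrases this as a contradiction and counts intersecting representations per member of the disjoint family rather than per element of the union, which yields the slightly sharper constant $g(h(k-1)+1)$ recorded in Remark \ref{remarkcap}, but the bound $hkg$ you obtain is exactly what the lemma as stated requires.
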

\begin{remark} \label{remarkcap}
In fact, the ``true'' lemma, which is a little more ugly, is
$$B^*_h[g] \cap B_{h -1}[k] \subseteq B_h[g(h(k-1) + 1)]$$
and this is what we will prove. As an example, we can have in mind that a sequence $A$ with $r^*_{3,A}(n) \le g$ which is a Sidon sequence is also a $B_3[g]$ sequence, i. e. $B^*_3[g] \cap B_2[1] \subseteq B_3[g]$, since in this case two representations that share one element share the three of them.
\end{remark}

\begin{proof}
We proceed by contradiction. Suppose that $A \in B^*_h[g] \cap B_{h-1}[k]$ and suppose that there is an $n$ with $g(h(k-1) + 1)+1$ distinct representations as a sum of $h$ elements of $A$.

Fix one of these representations, say $n = x_1 + x_2 + \cdots + x_h$. How many representations of $n$ can intersect with it? Well, the number of representations of $n$ that involve $x_1$ is at most $k$, since $A \in B_{h-1}[k]$. So we have at most $k-1$ more representations with $x_1$. The same thing happens for $x_2, \ldots, x_h$. So, finally, the maximun number of representations that can intersect with the one we fixed is $h(k-1)$.

Now we fix a second representation of $n$ that does not intersect with the first one that we chose. Again, there are at most $h(k-1)$ representations that intersect with our new choice.

After $g$ disjoint choices, counting them and all the representations that intersect with each one of them, we have at most $gh(k-1) + g$ representations. By hipothesis, there is at least one representation of $n$ left that does not intersect with any of our $g$ choices. But this means that we have $g+1$ disjoint representations of $n$, which contradicts the fact that $A \in B^*_h[g]$.
\end{proof}

\begin{proposition} \label{c_h}
For every $h \ge 2$ and $0<\e<1/h$ a random sequence in $S(1 -
\frac{1}{h} + \e, m)$ is a $B_h[g]$ sequence for every $g \ge
c_h/\e$ with probability $1 - O(\frac{1}{m})$, where $c_h = 2^{h-3}h
(h-1)!^2$.
\end{proposition}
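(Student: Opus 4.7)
The plan is to induct on $h$, combining Proposition \ref{fla} (which controls $r^*_{h,A}$) with the inductive hypothesis applied at level $h-1$, and then using the sharper form of Lemma \ref{cap} from Remark \ref{remarkcap} to pass from $B^*_h[\,\cdot\,]\cap B_{h-1}[\,\cdot\,]$ back to $B_h[\,\cdot\,]$.

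For the base case $h=2$, any two non-decreasing pairs summing to the same value that share a coordinate must coincide, so $r^*_{2,A}=r_{2,A}$ and $B^*_2[g]=B_2[g]$; Proposition \ref{fla} at $h=2$ then gives the conclusion for every $g\ge 1/\e$, in agreement with $c_2/\e$ since $c_2=2^{-1}\cdot 2\cdot(1!)^2=1$. For the inductive step, assume the result for $h-1$ with constant $c_{h-1}$. Given $A$ random in $S(1-1/h+\e,m)$, rewrite the exponent as $\alpha=1-1/(h-1)+\e'$, where $\e':=\e+\tfrac{1}{h(h-1)}$; the hypothesis $0<\e<1/h$ forces $\tfrac{1}{h(h-1)}<\e'<\tfrac{1}{h-1}$, so the inductive hypothesis is applicable at $\e'$. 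Proposition \ref{fla} yields $A\in B^*_h[g_1]$ for every $g_1\ge 2/(h\e)$ with probability $1-O(1/m)$, while the inductive hypothesis at parameter $\e'$ gives $A\in B_{h-1}[k]$ for every $k\ge c_{h-1}/\e'$ with probability $1-O(1/m)$. Crucially, since $\e'>1/(h(h-1))$, it suffices to take $k=\lceil h(h-1)\,c_{h-1}\rceil$, a value independent of $\e$.

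A union bound ensures both events hold simultaneously with probability $1-O(1/m)$, and Remark \ref{remarkcap} then gives $A\in B_h[g_1(h(k-1)+1)]$. With $g_1=\lceil 2/(h\e)\rceil$ and the crude estimate $h(k-1)+1\le hk$, this bound is at most $\frac{2}{h\e}\cdot h\cdot h(h-1)c_{h-1}=\frac{2h(h-1)c_{h-1}}{\e}$. The recursion $c_h=2h(h-1)c_{h-1}$ with $c_2=1$ unwinds to $c_h=2^{h-3}h(h-1)!^2$, matching the stated constant. The main subtlety lies in the bookkeeping of constants: the parameter $\e$ at level $h$ must translate into an $\e'$ at level $h-1$ that is bounded below independently of $\e$, so that the inductive hypothesis produces a $k$ independent of $\e$; otherwise the final bound would fail to be $\ll\e^{-1}$. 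A secondary point is that recovering the exact constant $2^{h-3}$ requires the refined containment $B^*_h[g]\cap B_{h-1}[k]\subseteq B_h[g(h(k-1)+1)]$ of Remark \ref{remarkcap} rather than the cruder $B_h[hkg]$; the cruder bound still gives an $O(\e^{-1})$ result, but with a slightly larger constant.
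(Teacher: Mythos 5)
Your proof is correct and follows essentially the same route as the paper: induction on $h$, with Proposition \ref{fla} supplying the $B^*_h[g_1]$ control, the observation that $\alpha = 1-\frac1h+\e > 1-\frac{1}{h-1}+\frac{1}{h(h-1)}$ so that the level-$(h-1)$ parameter is bounded below independently of $\e$, and the containment lemma to combine the two events. One small misstatement at the end: the refined containment of Remark \ref{remarkcap} is not actually needed to recover the constant $2^{h-3}$ --- the paper obtains exactly $g = h\cdot\frac{c_h}{2}\cdot\frac{2}{h\e} = c_h/\e$ from the crude bound $B_h[hkg]$ of Lemma \ref{cap}, and your own estimate $h(k-1)+1\le hk$ reduces your argument to precisely that crude bound anyway.
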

\begin{proof}
We proceed by induction on $h$.

For $h=2$, and using Proposition \ref{fla}, the result is true since a $B^*_2[g]$ sequence is the same that a $B_2[g]$ sequence.

Now suppose that the result is true for $h - 1$. Let $\alpha = 1 -
\frac{1}{h} + \e$. From Proposition \ref{fla} we know that a random
sequence in $S(\alpha , m)$ is $B_h^*[g_1]$ for every $g_1 \ge
\frac{2}{h\e}$ with probability $1 - O(\frac{1}{m})$. But, since
$\alpha > 1 - \frac {1}{h-1} + \frac{1}{h(h-1)}$, by the induction
hypothesis we know that this random sequence is also $B_{h-1}[g_2]$
for every $g_2 \ge h(h-1)c_{h-1} = c_h/2$ with probability $1 -
O(\frac{1}{m})$. So, with probability $1 - O(\frac{1}{m})$ the two
things happen at the same time, i. e. the random sequence is in
$B_h^*[g_1] \cap B_{h-1}[g_2]$ for every $g_1 \ge \frac{2}{h\e}$ and
$g_2 \ge c_h/2$.

Lemma \ref{cap} concludes the proof.
\end{proof}

Lemma \ref{densi} and Proposition \ref{c_h} imply Theorem \ref{e}.

\section{Sequences with $r_{3,A}(n)$ bounded}


Now\footnote{Of course, our Proposition \ref{c_h} gives a relation
between $g$ and $\e$, but observe that for $g=1$ it gives values of
$\e \ge 1$, so it does not give any useful information. In our
terminology, this is not a ``satisfactory enough exponent''.}, we will
try to find a more precise relation between $g$ and $\e$. In fact, the
result of Erd\H{o}s and R\'enyi in \cite{ER} is more precise than
what we said in the Introduction. They proved that for every $g
> \frac{1}{2 \e} - 1$ there is a  $B_2[g]$ sequence, $A$, with $A(x)
\gg x^{1/2 - \e}$. Stated perhaps in a more convenient way, what
they proved is that for every positive integer $g$ there is a
sequence $A$ such that $r_{2, A}(n) \le g$ with $A(x) \ge
x^{\frac{1}{2 + 2/g } - o(1)}$, as $x \rightarrow \infty$.

In \cite{C} the first author used the ``alteration method'' (perhaps
our random sequences do not satisfy what we want but they do if we
remove ``a few'' elements) to prove that for every $g > \frac{1}{4
\e} - \frac{1}{2}$ there is a  $B_2[g]$ sequence, $A$, with $A(x)
\gg x^{1/2 - \e}$. In other words, for every positive integer
$g$ there is a sequence $A$ such that $r_{2, A}(n) \le g$ with
$A(x) \gg x^{\frac{1}{2 + 1/g }-o(1)}$ as $x \rightarrow \infty$.

\smallskip

In this section we will use the ideas from \cite{C} to prove Theorem
\ref{boundge}, which is a refinement on the dependence between $g$ and
$\e$, for sequences with $r_{3,A}(n) \le g$.

\bigskip

\begin{definition} \label{bad}
Given a sequence of positive integers, $A$, we say that $x$ is
$(g+1)_h-$bad (for $A$) if $x \in A$ and there exist $x_1, \dots ,
x_{h-1}\in A$, $x_1 \le \cdots \le x_{h-1} \le x$, such that $r_{h, A}
(x_1 + \cdots + x_{h-1} + x) \ge g+1$.
\end{definition}

In other words, $x \in A$ is $(g+1)_h-$bad if it is the largest
element in a representation of an element that has more than $g$
representations as a sum of $h$ elements of $A$. Observe that $A$ is
a $B_h[g]$ sequence if and only if it does not contain $(g+1)_h-$bad
elements.

\begin{definition} \label{Btilde}
A sequence of positive integers, $A$, is in $\tilde{B}_h[g]$ if the
number of $(g+1)_h-$bad elements less than or equal to $x$ for $A$, say
$\mathcal B(x)$, is
$$\mathcal B(x) = o(A(x)) \text{ when } x \to \infty.$$
\end{definition}

So, $A \in \tilde{B}_h[g]$ if removing a few elements from it (``a
little o''), it is a $B_h[g]$ sequence.

\begin{notation}
We denote by $\mathcal B_{k,h}(g+1)$ the set of $(g+1)_h-$bad
elements for $A$ in the interval $[h^k, h^{k+1})$.
\end{notation}

Substituting $r_{h, A}$ by $r^*_{h, A}$, we define the $(g+1)^*_h-$bad elements for $A$.
Analogously, we define $A \in \tilde{B}^*_h[g]$ and $\mathcal B^*_{k,h}(g+1)$.

\bigskip

Obviously, the ``tilde'' version of Lemma \ref{cap} is also true. In particular, from Remark \ref{remarkcap}:
\begin{lemma} \label{captilde}
$\tilde{B}^*_3[g] \cap \tilde{B}_{2}[1] \subseteq \tilde{B}_3[g]$.
\end{lemma}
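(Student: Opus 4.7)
The plan is to reduce to Lemma \ref{cap} after removing from $A$ both classes of bad elements corresponding to the tilde hypotheses. Let $B_1$ and $B_2$ denote the sets of $(g+1)^*_3$-bad and $2_2$-bad elements of $A$ respectively; by assumption, $|B_1\cap[1,X]|+|B_2\cap[1,X]|=o(A(X))$. Setting $A':=A\setminus(B_1\cup B_2)$, I would first verify $A'\in B^*_3[g]\cap B_2[1]$: any violation would exhibit a representation in $A'$ whose maximum lies simultaneously in $B_1$ (respectively $B_2$) and in $A\setminus(B_1\cup B_2)$, which is impossible. Lemma \ref{cap} then gives $A'\in B_3[g]$.

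To deduce the desired bound $|\mathcal{B}_3(A)\cap[1,X]|=o(A(X))$ on the number of $(g+1)_3$-bad elements of $A$, I would rerun the greedy argument from the proof of Lemma \ref{cap} on each witness of a bad $x$. Writing $n=x_1+x_2+x$ and $R_0=(x_1,x_2,x)$, there are two alternatives: either every other representation of $n$ is disjoint from $R_0$, which (after iterating) gives $r^*_{3,A}(n)\ge g+1$ and hence $x\in B_1$; or some $y\in R_0$ satisfies $r_{2,A}(n-y)\ge 2$. If $y\in\{x_1,x_2\}$, then the 2-representation $\{x_{3-i},x\}$ of $n-y$ has maximum $x$, forcing $x\in B_2$. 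The only subtle subcase is $y=x$, in which $x_1+x_2$ admits a second 2-representation $\{u,v\}$ yielding a new 3-representation $(u,v,x)$ of $n$, but $x$ itself need not lie in $B_1\cup B_2$.

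The main obstacle is bounding the residual bad $x$'s from this last subcase. My plan is to map each such $x$ to the pair $(z,(x_1,x_2,u,v))$ with $z=\max(x_2,v)\in B_2\cap[1,X]$ and to double-count the preimages: for each $z\in B_2$, the Sidon-violating quadruples $(x_1,x_2,u,v)$ with $x_1+x_2=u+v$ and maximum $z$ are controlled by the 2-representation structure at $z$, and for each such quadruple the candidates $x\in A$ extending it to a representation of some $n$ with $r_{3,A}(n)\ge g+1$ are limited via $A'\in B_3[g]$. Combining these ingredients yields a bound of the form $o(A(X))$ on the residual count, completing the proof.
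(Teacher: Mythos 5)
Your first paragraph is correct and is, in essence, the paper's entire argument: the paper offers no proof beyond the remark that ``obviously, the tilde version of Lemma \ref{cap} is also true'', and the intended content is exactly your reduction. Setting $A'=A\setminus(B_1\cup B_2)$, one checks as you do that $A'\in B^*_3[g]\cap B_2[1]$, whence $A'\in B_3[g]$ (note that to get $B_3[g]$ rather than $B_3[3g]$ you must invoke the sharper constant of Remark \ref{remarkcap}, not the statement of Lemma \ref{cap}), and $A'(X)=A(X)-o(A(X))$. This is precisely the reading of $\tilde{B}_h[g]$ that the paper announces immediately after Definition \ref{Btilde} (``removing a few elements from it, it is a $B_h[g]$ sequence''), and it is all that is needed to deduce Theorem \ref{boundge}. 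Had you stopped there, the proof would be complete and would coincide with the paper's.

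The difficulty lies in your second and third paragraphs, where you try to prove the literal content of Definition \ref{Btilde}, namely that the number of $(g+1)_3$-bad elements \emph{of $A$ itself} is $o(A(X))$. The subcase you call ``subtle'' is not merely subtle; it is fatal, and no double count can rescue it, because that stronger statement is false. Take $A=F\cup C$, where $F=\{1,\dots,g+1\}\cup\{s-g-1,\dots,s-1\}$ for a large $s$, so that $r_{2,A}(s)\ge g+1$, and $C$ is an infinite, extremely sparse, generically chosen sequence. Then $B_2$ is finite (only the maxima of the finitely many integers with two $2$-representations are $2_2$-bad), $B_1$ can be made finite as well, so $A\in\tilde{B}^*_3[g]\cap\tilde{B}_2[1]$; yet for every large $x\in C$ the integer $n=s+x$ admits the $g+1$ representations $(i,s-i,x)$, $1\le i\le g+1$, so $x$ is $(g+1)_3$-bad and $\mathcal B(X)\sim A(X)$. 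This also pinpoints where your proposed double count breaks down: for a fixed Sidon-violating quadruple the set of admissible $x$ is \emph{not} limited by $A'\in B_3[g]$, because all $g+1$ representations of $n=s+x$ pass through elements of $B_2$ and hence disappear in $A'$; the hypothesis $r_{3,A}(n)\ge g+1$ simply does not transfer to $A'$. So the residual count can be as large as $(1-o(1))A(X)$, and the last step of your plan cannot be carried out. The correct move is not to repair the count but to work with the ``removable'' reading of $\tilde{B}_3[g]$, as the paper does, for which your first paragraph already suffices.
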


Now we can write the next theorem, which we will use only in the cases $h=2$ and $h=3$.

\begin{theorem} \label{Btilde*hg}
Given $0 < \delta < \frac{1}{2h - 3}$ and $h \ge 2$, a random sequence $A$ in $S \left( \frac{2h - 4}{2h - 3} + \delta, m \right)$ is $\tilde{B}^*_h[g]$ for every $g > \frac{\frac{h-1}{2h-3} - (h-1)\delta}{\frac{h - 3}{2h - 3} + h \delta}$ with probability $1 - O \left( \frac{1}{\log m} \right)$.
\end{theorem}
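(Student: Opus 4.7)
The plan is to partition $\mathbb{N}$ into the dyadic intervals $I_k := [h^k, h^{k+1})$, bound $\E|\mathcal{B}^*_{k,h}(g+1)|$ for each $k \ge k_0 := \lceil \log_h m \rceil$, and then conclude by Markov's inequality summed over $k$. Concretely, if one establishes $\E|\mathcal{B}^*_{k,h}(g+1)| \ll h^{k(1-\alpha-\eta)}$ for some $\eta > 0$ depending on $g$ and $\delta$, then Markov's inequality with threshold $h^{k(1-\alpha)}/k^2$ gives
\[
\PP\!\left(|\mathcal{B}^*_{k,h}(g+1)| > h^{k(1-\alpha)}/k^2\right) \ll k^2 h^{-k\eta},
\]
and summing over $k \ge k_0$ yields a total failure probability of order $(\log m)^2 m^{-\eta} = O(1/\log m)$. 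On the complementary event, $|\mathcal{B}^*_{k,h}(g+1)| = o(h^{k(1-\alpha)})$ for every $k \ge k_0$, and combining with Theorem~\ref{densi} gives $\mathcal{B}(x) = o(A(x))$, which is the definition of $A \in \tilde{B}^*_h[g]$.

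\textbf{Main contribution.} For each $x \in I_k$ I would bound $\PP(x\text{ is bad})$ by a union bound over witness tuples $\bar z = (x_1, \ldots, x_{h-1}, x)$ with $x_i \le x$:
\[
\PP(x\text{ bad}) \le \sum_{\bar z} \PP\bigl(\bar z \subseteq A,\; r^*_{h,A}(n_z) \ge g+1\bigr), \qquad n_z := x+\textstyle\sum_i x_i.
\]
The dominant contribution comes from configurations in which $\bar z$ itself is one of the $g+1$ pairwise disjoint representations of $n_z$: there then exist $g$ further pairwise disjoint reps of $n_z$ in $A$ all disjoint from $\bar z$. Using the argument behind Lemma~\ref{Pfla} (factorisation over pairwise disjoint reps) together with Lemma~\ref{Erla}, this contribution is bounded by $\PP(\bar z \subseteq A) \cdot C_h\, n_z^{(h(1-\alpha)-1)g}$. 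Summing over $\bar z$ of largest element $x$ uses $\sum_{x_1 \le \cdots \le x_{h-1} \le x}\prod_i x_i^{-\alpha} \asymp x^{(h-1)(1-\alpha)}$ (valid since $\alpha < 1$), and finally summing over $x \in I_k$ with $n_z \asymp x$ on the main mass yields
\[
\E\bigl|\text{dominant bad in }I_k\bigr| \asymp h^{k\bigl(h(1-\alpha) + (h(1-\alpha)-1)g\bigr)}.
\]
A brief manipulation shows the exponent is strictly less than $1-\alpha$ exactly when $g > \frac{(h-1)(1-\alpha)}{1 - h(1-\alpha)}$, and this inequality, after substituting $\alpha = \frac{2h-4}{2h-3}+\delta$, is precisely the theorem's threshold on $g$.

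\textbf{Main obstacle.} The hard step will be controlling the complementary configurations in which $\bar z \subseteq A$ but $\bar z$ is \emph{not} among the $g+1$ pairwise disjoint reps $\bar y_1, \ldots, \bar y_{g+1}$. By pairwise disjointness each element of $\bar z$ lies in at most one $\bar y_i$, so the total overlap of $\bar z$ with the family is at most $h$, and I would stratify by the exact overlap pattern. Each shared element $v \in \bar z \cap \bar y_i$ cancels the factor $v^{-\alpha}$ in $\PP(\bar z \cup \bar y_i \subseteq A)$, but pins the remaining $h-1$ coordinates of $\bar y_i$ to sum to $n_z - v$, contributing at most $C(n_z - v)^{(h-1)(1-\alpha)-1}$ by Lemma~\ref{Erla}; the bookkeeping must show each stratified contribution is strictly dominated by the main term under the stated hypothesis on $g$. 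For $h = 3$ this is substantially eased by the elementary fact that two distinct sorted $3$-tuples summing to the same $n$ share at most one element (a shared pair would determine the third), so only overlap patterns in which each $\bar y_i$ meets $\bar z$ in at most one element need to be analysed.
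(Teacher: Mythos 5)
Your proposal follows the paper's proof almost exactly: the same dyadic blocks $[h^k,h^{k+1})$, the same expectation bound for the bad elements in each block obtained by factorising over pairwise disjoint representations and invoking Lemma~\ref{Erla}, and the same Markov-plus-summation over $k\ge \log_h m$; your exponent $h(1-\alpha)+(h(1-\alpha)-1)g$ is identical to the paper's $(h-1-h\alpha)(g+1)+1$, so the threshold on $g$ matches. The only divergence is your ``main obstacle'', which is not an obstacle and is the one step you leave unproven. You do not need to control the configurations in which the witness $\bar z$ fails to be one of the $g+1$ pairwise disjoint representations: for the deletion argument behind Lemma~\ref{captilde} it suffices to remove, from each family of $g+1$ pairwise disjoint representations of some $n$, the overall largest element of the family; taking the representation containing that element as the witness, the remaining $g$ representations are automatically pairwise disjoint from it and from each other. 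These are exactly the configurations of your ``dominant contribution'', and their expected number is all the paper bounds. So the stratification by overlap pattern can simply be dropped; it would only be needed if one insisted on counting every element that is bad in the literal sense of Definition~\ref{bad}, which the application never requires.
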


\begin{proof}
We consider a random sequence $A$ in $S(\alpha, m)$.
\begin{eqnarray*}
\E(|\mathcal B^*_{k,h}(g+1)|) & = & \sum_{h^k \le x < h^{k+1}} \PP(x \text{ is } (g+1)^*_h-\text{bad}) \\
& \le & \sum_{h^k \le x < h^{k+1}} \sum_{\substack{\bar{x}_{g+1}=(y_1, \ldots, y_{h-1},x) \\ y_1 \le \cdots \le y_{h-1} \le x}} \prod_{z \in Set(\bar{x}_{g+1})} \PP(z \in A) \cdot\\
&&\cdot \sum_{\substack{ \{ \bar{x}_1, \ldots, \bar{x}_g\} \\ \bar{x}_i \in R_h(y_1 + \cdots + y_{h-1} + x) \\ \bar{x}_1, \ldots, \bar{x}_g, \bar{x}_{g+1} \\ \text{pairwise disjoint}}} \prod_{i=1}^{g} \prod_{z \in Set(\bar{x}_i)} \PP(z \in A) \\
& \le & \sum_{h^k \le n < h^{k+2}} \left( \sum_{\bar{x} \in R_h(n)} \prod_{x \in Set(\bar{x})} \PP(x \in A) \right)^{g+1} \\
& \le & \sum_{h^k \le n < h^{k+2}} \left( C n^{h(1 - \alpha) - 1} \right)^{g+1} \\
& \ll & \sum_{h^k \le n < h^{k+2}} n^{(h - 1 - h \alpha) (g+1)} \\
& \ll & h^{k((h - 1 - h \alpha)(g+1) + 1)}
\end{eqnarray*}
when $k \to \infty$, where we have used Lemma \ref{Erla}.

Now, we can use Markov's Inequality\footnote{\textbf{Markov's Inequality.} For a random variable $X$ and $a > 0$, $\PP(|X| \ge a) \le \dfrac{\E(|X|)}{a}$.} to have:
$$\PP(|\mathcal B^*_{k,h}(g+1)| \ge k^2 \E(|\mathcal B^*_{k,h}(g+1)|)) \le \frac{1}{k^2}.$$

Since $|\mathcal B^*_{k,h}(g+1)| = 0$ for $h^{k+1}<m$ we have
$$ \PP \left (|\mathcal B^*_{k,h}(g+1)| \ge k^2 \E(|\mathcal B^*_{k,h}(g+1)|) \text{ for some } k \right ) \le
\sum_{k \ge \log_h m - 1} \frac{1}{k^2} = O \left( \frac{1}{\log m}
\right),$$ so with probability $1 - O \left( \frac{1}{\log m}
\right)$ we have that $|\mathcal B^*_{k,h}(g+1)| \ll k^2 h^{k((h - 1
- h \alpha)(g+1) + 1)}$ for every $k$.

On the other hand, by Theorem \ref{densi} we know that $A(x) \gg x^{1 - \alpha}$ with probability 1.

So, given $x$, we will have $h^l \le x < h^{l+1}$ for some $l$ and with probability $1 - O \left( \frac{1}{\log m} \right)$ the number of bad elements less than or equal to $x$ will be
$$\mathcal B(x) \le \sum_{k=0}^{l} |\mathcal B^*_{k,h}(g+1)| \ll l^2 h^{l ((h - 1 - h \alpha)(g+1) + 1)}$$
while the number of elements in $A$ less than or equal to $x$ will be
$$A(x) \gg h^{l(1 - \alpha)}.$$

Since, in order for $A$ to be in $\tilde{B}^*_h[g]$, we want $\mathcal B(x) = o(A(x))$ we need
$$(h - 1 - h \alpha)(g+1) + 1 < 1 - \alpha$$
and so, with $\alpha = \frac{2h - 4}{2h - 3} + \delta$ we have
$$g > \frac{\frac{h-1}{2h-3} - (h-1)\delta}{\frac{h - 3}{2h - 3} + h \delta}.$$
\end{proof}

In particular, for $h=2$, since a $\tilde{B}^*_2[g]$ sequence is also a $\tilde{B}_2[g]$ sequence, we deduce that given
$0 < \e < \frac{1}{3}$, a random sequence $A$ in $S \left( \frac{2}{3} + \e, m \right)$ is $\tilde{B}_2[1]$
with probability $1 - O \left( \frac{1}{\log m} \right)$.

\smallskip

Also, for $h=3$, we deduce that given $0 < \delta < \frac{1}{3}$, a random sequence $A$ in $S \left( \frac{2}{3} + \delta, m \right)$ is
 $\tilde{B}^*_3[g]$ for every $g > \frac{2}{9 \delta} - \frac{2}{3}$ with probability $1 - O \left( \frac{1}{\log m} \right)$.

\smallskip

Lemma \ref{captilde} gives the proof of Theorem \ref{boundge}.

\end{document}